\documentclass[reqno]{amsart}
\usepackage[T1]{fontenc}
\usepackage{lmodern}
\usepackage{amsmath,amssymb,amsthm}
\usepackage[colorlinks=true,linkcolor=blue,citecolor=blue,urlcolor=blue]{hyperref}
\numberwithin{equation}{section}
\newtheorem{theorem}{Theorem}[section]
\newtheorem{corollary}[theorem]{Corollary}

\newtheorem{lemma}[theorem]{Lemma}
\theoremstyle{remark}
\newtheorem{remark}[theorem]{Remark}
\newcommand{\HH}{\mathbb H}
\newcommand{\RR}{\mathbb R}

\newcommand{\supp}{\operatorname{supp}}
\newcommand{\Hull}{\operatorname{Hull}}
\newcommand{\Isom}{\operatorname{Isom}}
\newcommand{\Tr}{\operatorname{Tr}}
\newcommand{\Ker}{\operatorname{ker}}
\title{Singularity of harmonic measures for hyperbolic lattices}
\author{Nikolay Bogachev}
\address{Department of Computer and Mathematical Sciences, University of Toronto Scarborough, 1095 Military Trail, Toronto, ON M1C 1A3, Canada}
\email{n.bogachev@utoronto.ca}
\date{}

\begin{document}

\begin{abstract}
Let $\Gamma<\Isom(\HH^n)$, $n\geq2$, be a cocompact lattice containing a convex cocompact codimension-one subgroup in the sense of Sageev. We prove that every finitely supported admissible random walk on $\Gamma$ has hitting measure singular with respect to Lebesgue measure on $\partial\HH^n$. The proof extends the method of Kosenko--Tiozzo, replacing Fourier analysis for a cyclic subgroup by a von Neumann dimension argument that also applies to nonabelian subgroups. As corollaries, we prove the singularity conjecture of Kaimanovich and Le Prince for cocompact hyperbolic lattices admitting a proper cocompact cubulation, cocompact Kleinian groups, and cocompact hyperbolic lattices with totally geodesic sublattices of codimension one. In particular, our result covers all cocompact hyperbolic reflection groups, cocompact arithmetic lattices of simplest type and cocompact nonarithmetic lattices arising from the hybrid and inbreeding constructions. 
\end{abstract}

\maketitle

\section{Introduction}

Let $\Gamma<\Isom(\HH^n)$ be a lattice, where $n\geq2$, and let $\mu$ be a finitely supported probability measure on $\Gamma$ which is also {\em admissible}, meaning that its support generates $\Gamma$ as a semigroup. The associated right random walk converges almost surely to the sphere at infinity $\partial\HH^n$. Its {\em hitting measure} $\nu$, also called the {\em harmonic measure}, is the unique $\mu$-stationary probability measure on $\partial\HH^n$; see Kaimanovich \cite{Kai00}. Let $\lambda$ be the visual probability measure based at a point $o\in\HH^n$; its measure class does not depend on $o$.

The comparison between harmonic measure and Lebesgue measure goes back to Furstenberg \cite{Fur71}, who constructed infinitely supported random walks on lattices with absolutely continuous hitting measures. For finitely supported random walks, the singularity conjecture of Kaimanovich--Le Prince \cite{KLP11} predicts that $\nu\perp\lambda$. We prove this whenever a cocompact lattice $\Gamma < \Isom(\HH^n)$ contains a convex cocompact {\em codimension-one subgroup} $H < \Gamma$ in the sense of Sageev \cite{Sageev95}, which means that the Schreier graph $H\backslash\operatorname{Cay}(\Gamma,S)$ has more than one end, where $S$ is any finite generating set of $\Gamma$.

\begin{theorem}\label{thm:main}
Let $n\geq2$ and let $\Gamma<\Isom(\HH^n)$ be a cocompact lattice. Suppose that $\Gamma$ contains a convex cocompact codimension-one subgroup $H$. Then every finitely supported admissible probability measure $\mu$ on $\Gamma$ has hitting measure singular with respect to Lebesgue measure.
\end{theorem}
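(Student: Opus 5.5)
We follow the Kosenko--Tiozzo strategy, which reduces singularity to a strict inequality between the fundamental quantities of the walk. By the Guivarc'h / Blachère--Haïssinsky--Mathieu dimension theory for cocompact lattices, $\nu\ll\lambda$ (equivalently $\nu\sim\lambda$ by ergodicity) forces the fundamental equality $h_\mu=\ell_\mu\cdot(n-1)$. Here $h_\mu$ is the asymptotic entropy, $\ell_\mu$ the drift in $\HH^n$, and $n-1$ the volume growth exponent. It therefore suffices to show $h_\mu<(n-1)\ell_\mu$.

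\textbf{Reduction.} We argue by contradiction and assume $\nu$ is absolutely continuous. The first consequence we want is that the Green metric $d_G(x,y)=-\log F(x,y)$ is roughly $(n-1)$ times the hyperbolic metric. Precisely, $|d_G(x,y)-(n-1)d_{\HH^n}(xo,yo)|\le C$ for all $x,y\in\Gamma$. We expect this to follow from the Martin boundary identification (Ancona) together with Radon--Nikodym cocycle rigidity, using $d\nu/d\lambda$ bounded above and below. This step is standard in Kosenko--Tiozzo and Blachère--Haïssinsky--Mathieu.

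\textbf{Using $H$.} Next we restrict to the convex cocompact codimension-one subgroup $H$. Since $H$ is quasiconvex, the first-return (induced) measure $\mu_H$ on $H$ has Green function comparable to $F|_H$, by Ancona inequalities along quasigeodesics. Hence $d_{G,H}\approx(n-1)d_{\HH^n}$ on $H$. But the limit set $\Lambda_H$ has Hausdorff dimension $\delta_H<n-1$, so $H$ grows exponentially at rate $\delta_H$. The codimension-one hypothesis, via Sageev, provides a wall separating $\Gamma$ into two deep complementary components, which lets one control excursions of the walk relative to $H$.

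\textbf{Core estimate.} The main obstacle is the replacement for Kosenko--Tiozzo's Fourier step. There, for a cyclic subgroup, one shows that the decay rate of $F$ along $H$ cannot equal the full growth rate. Our plan is to consider the operator $P_H$ of the induced walk acting on $\ell^2(H)$. We then compare its spectral radius with what the Green-metric identity forces. The identity $d_G\approx(n-1)d$ along $H$ would make $\sum_{h\in H}F(e,h)^{s}$ converge exactly at $s=\delta_H/(n-1)<1$. We will show this is incompatible with the spectral behaviour of the walk restricted near $H$.

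The nonabelian replacement for Fourier analysis is a von Neumann dimension argument. The space of $\mu$-harmonic functions, or of Green kernels, supported near the wall $\overline{\Lambda_H}$ forms a Hilbert $L(H)$-module. Its von Neumann dimension, computed over the group von Neumann algebra of $H$, must be finite by cocompactness of $H$ on its hull. The Green-metric equality, however, would force it to be positive and too large: multiplicity considerations from the two sides of the wall give $\dim_{L(H)}\ge 2$ where the bound is at most $1$. This contradiction yields $h_\mu<(n-1)\ell_\mu$, and hence $\nu\perp\lambda$.

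Making this module and its dimension count precise is where we expect the real work. We would try first to imitate Kosenko--Tiozzo's $L^2$ estimate, replacing the characters of $\mathbb Z$ with the Hilbert $H$-module structure.
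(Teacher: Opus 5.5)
Your proposal has a genuine gap. The step meant to produce the contradiction (your ``core estimate'') is never supplied, and the dimension count you sketch does not correspond to a true statement.

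Your reduction to the fundamental equality and to $d_G\approx(n-1)d$ only reformulates $\nu\sim\lambda$. As stated by Blach\`ere--Ha\"issinsky--Mathieu it also assumes $\mu$ symmetric, which the theorem does not.

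More seriously, the information you extract along $H$ cannot lead to a contradiction. You get $G|_H\asymp e^{-(n-1)d}$, and hence summability of $\sum_{h\in H}F(e,h)^s$ for $s>\delta_H/(n-1)$. The same decay holds for Brownian motion on $\HH^n$, whose hitting measure is Lebesgue, and infinitely supported walks with $\nu\sim\lambda$ exist by Furstenberg. So no comparison of decay rates or spectral radii of the induced walk on $H$ can rule out $\nu\sim\lambda$: finite support must enter essentially, and in your plan it never does.

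Likewise, an additive-error statement $|d_G-(n-1)d|\le C$ throws away the exact form of the kernel, which the final rigidity step needs. The numbers in your count are also wrong. The relevant finite bound is $N$, the number of $H$-orbits in a neighbourhood of the hull whose width depends on the step length of $\mu$, not $1$. The opposing lower bound is $\infty$, not $2$ from ``two sides''.

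The missing mechanism, used in the paper, runs as follows.

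\begin{enumerate}
\item \emph{A discrete cut set.} Finite support gives $A=\{a\in\Gamma: d(ao,Q)\le L+1\}$, where $Q=\Hull(\Lambda_H)$ and $L$ is the maximal jump. Every random-walk path from near $\Omega_-$ to near $\Omega_+$ must visit $A$, and $A$ is a union of $N<\infty$ left $H$-orbits.
\item \emph{Green factorization.} The strong Markov property, together with invertibility of the compression $G_A$ (from coercivity of the bounded operator $G=(I-P)^{-1}$, bounded by Kesten), gives $G(x,y)=G(x,A)G_A^{-1}G(A,y)$.
\item \emph{Passage to the boundary.} Ancona inequalities give $\ell^2(A)$ domination, so you can take boundary limits. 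The Na\"im kernel on $\Omega_-\times\Omega_+$ then factors through $\ell^2(A)\cong\ell^2(H)^N$.
\item \emph{Exact proportionality.} Under $\nu\sim\lambda$, Hopf ergodicity gives the exact identity $J=c\mathcal L$ between the Na\"im and Liouville currents, not merely bounded distortion.
\item \emph{Von Neumann dimension.} Consider $f\mapsto(\int_E f\,k_a^+\,dm_+)_{a\in A}$, from $L^2$ of an $H$-invariant subset $E\subset\Omega_+$ (infinite von Neumann $H$-dimension) to $\ell^2(A)$ (dimension $N$). It is bounded and $H$-equivariant, so a trace argument gives it a nonzero kernel.
\item \emph{Injectivity.} The exact factorization shows that any $f$ in that kernel has vanishing Liouville transform $\int f(w)|z-w|^{-2d}\,dw$ for $z\in\Omega_-$. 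Injectivity of this transform, proved via polynomial moments on a bounded domain, forces $f=0$, a contradiction.
\end{enumerate}
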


For subgroups of a cocompact hyperbolic lattice, quasiconvexity and convex cocompactness are equivalent, as follows from Bowditch \cite[Proposition~2.5.4]{Bow95}. For such a subgroup $H$, the codimension-one condition is equivalent to the existence of a partition $\partial\HH^n\setminus\Lambda_H=\Omega_-\sqcup\Omega_+$ into two nonempty open $H$-invariant sets, where $\Lambda_H$ denotes the limit set of $H$; see Fioravanti--Hagen \cite[Definition~2.19 and Remark~2.21]{FH21}.

For instance, the hypothesis holds whenever $\Lambda_H$ is a topologically embedded $(n-2)$-sphere: by the Jordan–Brouwer theorem its complement has two components, preserved by a subgroup of index at most two. The limit set need not be a sphere, however. A single such subgroup suffices; in particular, we do not assume that $\Gamma$ is cubulated. The sets $\Omega_\pm$ need not be connected, the lattice $\Gamma$ may have torsion, and the measure $\mu$ need not be symmetric.

For nonuniform Fuchsian lattices, the singularity conjecture follows from the work of Guivarc'h--Le Jan \cite{GLJ93}; see also Blach\`ere--Ha\"issinsky--Mathieu \cite[Theorem~1.10]{BHM11}. In all dimensions, the nonuniform case is covered by Gekhtman--Tiozzo \cite[Corollary~4.2]{GT} and Randecker--Tiozzo \cite{RT21}. For cocompact Fuchsian groups, Kosenko--Tiozzo \cite{KT22} proved the singularity conjecture for measures supported on the side-pairing generators of centrally symmetric fundamental polygons. The general case in dimension two was independently proved by B\'enard \cite{Benard26} and Kosenko--Tiozzo \cite{KT}. For $n=2$, a cyclic subgroup generated by an orientation-preserving loxodromic element is convex cocompact and codimension one, so Theorem~\ref{thm:main} recovers these results.

Kahn--Markovic \cite[Theorem~1.1]{KM} constructed quasi-Fuchsian surface subgroups in every closed hyperbolic $3$-manifold group. Their limit sets are topological circles, so Theorem~\ref{thm:main} gives the following corollary, after passing to a torsion-free finite-index subgroup.

\begin{corollary}\label{cor:kleinian}
Let $\Gamma<\Isom(\HH^3)$ be a cocompact lattice. Every finitely supported admissible probability measure $\mu$ on $\Gamma$ has hitting measure singular with respect to Lebesgue measure.
\end{corollary}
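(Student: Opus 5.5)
The plan is to deduce Corollary~\ref{cor:kleinian} from Theorem~\ref{thm:main}. The only issue is that the convex cocompact codimension-one subgroup will be found in a finite-index subgroup of $\Gamma$, not in $\Gamma$ itself.

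\emph{Finding the subgroup.} By Selberg's lemma, choose a torsion-free normal subgroup $\Gamma_0\lhd\Gamma$ of finite index. It may be taken inside the orientation-preserving isometries, so $M=\HH^3/\Gamma_0$ is a closed hyperbolic $3$-manifold. Kahn--Markovic \cite[Theorem~1.1]{KM} give a quasi-Fuchsian surface subgroup $H<\Gamma_0$. It is convex cocompact, and its limit set $\Lambda_H$ is a quasicircle, so a topologically embedded $1$-sphere in $\partial\HH^3=S^2$. By Jordan--Brouwer, $S^2\setminus\Lambda_H$ has exactly two components, which are open. Since $H$ is quasi-Fuchsian, hence orientation preserving and without swapping, each component is $H$-invariant. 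Otherwise we pass to the index-at-most-two stabilizer, which is still convex cocompact. By the criterion of Fioravanti--Hagen quoted above, $H$ is codimension one in $\Gamma_0$. Convex cocompactness is intrinsic to $\HH^3$, and the Schreier graph $H\backslash\operatorname{Cay}(\Gamma,S)$ is quasi-isometric to the one for $\Gamma_0$ because $[\Gamma:\Gamma_0]<\infty$. So the same $\Omega_\pm$ criterion shows that $H$ is also a convex cocompact codimension-one subgroup of $\Gamma$ itself.

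\emph{Applying the theorem.} The previous step reaches the hypothesis of Theorem~\ref{thm:main} for $\Gamma$ directly, so we apply the theorem to $\Gamma$ and the original $\mu$. No induced random walk on $\Gamma_0$ is needed. This is the cleanest route, since transferring singularity through a first-return walk on $\Gamma_0$ would destroy finite support. The hitting measure $\nu$ is therefore singular with respect to Lebesgue measure.

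\emph{Main obstacle.} The delicate step is checking that the codimension-one property and the invariance of both components hold for $H$ inside the possibly non-orientable or torsion-containing $\Gamma$. It does not suffice to know this inside $\Gamma_0$. I would handle it entirely through the boundary criterion: the condition is phrased purely in terms of $H$ acting on $\partial\HH^3\setminus\Lambda_H$, and this does not depend on the ambient group.
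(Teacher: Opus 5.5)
Your proposal is correct and follows the paper's proof essentially step for step. You pass to a torsion-free orientation-preserving finite-index $\Gamma_0$, take a Kahn--Markovic quasi-Fuchsian surface subgroup, use the two Jordan-curve complementary discs (after passing to the side-preserving subgroup of index at most two if needed) as the $H$-invariant partition $\Omega_\pm$, and apply Theorem~\ref{thm:main} directly to $\Gamma$, whose proof only uses this boundary partition. One minor quibble: orientation-preservation alone does not prevent swapping the two discs. For example, $z\mapsto -z$ preserves $\hat{\RR}$ and swaps the half-planes. Non-swapping instead comes from the definition of quasi-Fuchsian, and your fallback to the index-two stabilizer covers this case anyway.
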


The Kahn--Markovic surface subgroups were also used by Bergeron--Wise \cite[Theorem~5.3]{BW12} to construct proper cocompact actions of closed hyperbolic $3$-manifold groups on $\mathrm{CAT}(0)$ cube complexes. This was a key ingredient in Agol's proof of the virtual Haken and virtual fibering conjectures \cite{Agol13}, building on Wise's work on quasiconvex hierarchies; see \cite{Wise21}.  

In recent joint work with Kosenko and Tiozzo \cite{BKT25}, we used algebraic and geometric convergence to study singularity on families of Fuchsian and Kleinian groups. In particular, hyperbolic Dehn filling gives singularity for suitable measures on cocompact Kleinian groups approaching a nonuniform lattice. Corollary~\ref{cor:kleinian} applies to every cocompact Kleinian group and every finitely supported admissible measure.

The next corollary covers a broad class of cocompact hyperbolic lattices in all dimensions, as we explain below.

\begin{corollary}\label{cor:geodesic}
Let $\Gamma<\Isom(\HH^n)$, $n\geq2$, be a cocompact hyperbolic lattice such that the orbifold $\HH^n/\Gamma$ admits a properly immersed, totally geodesic subspace of codimension one. Then every finitely supported admissible probability measure $\mu$ on $\Gamma$ has hitting measure singular with respect to Lebesgue measure.
\end{corollary}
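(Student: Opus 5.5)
We will deduce Corollary~\ref{cor:geodesic} from Theorem~\ref{thm:main}. The task is to produce, inside $\Gamma$, a convex cocompact subgroup $H$ whose limit set is a round $(n-2)$-sphere and which preserves each of the two complementary caps. We use the assumed properly immersed totally geodesic codimension-one suborbifold $\Sigma\to\HH^n/\Gamma$.

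First, lift $\Sigma$ to the universal cover. One component of its preimage is a totally geodesic hyperplane $P\subset\HH^n$. Let $H_0=\operatorname{Stab}_\Gamma(P)$.

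Second, show that $H_0$ acts cocompactly on $P$. Since $\HH^n/\Gamma$ is compact and the immersion is proper, $\Sigma$ is compact. Hence $P/H_0$, which covers the image of $\Sigma$ (up to the finite ambiguity coming from orbifold points), is compact. So $H_0$ is a cocompact lattice in $\Isom(P)\cong\Isom(\HH^{n-1})$. Consequently $H_0$ is convex cocompact in $\Isom(\HH^n)$, with convex hull equal to $P$, and its limit set is $\Lambda_{H_0}=\partial P$, a round $(n-2)$-sphere.

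Third, arrange that the two sides are preserved. The complement $\partial\HH^n\setminus\partial P$ consists of two open round balls $\Omega_\pm$. The group $H_0$ either preserves each ball or swaps them. Let $H<H_0$ be the subgroup of index at most two that preserves each $\Omega_\pm$; it is still convex cocompact with $\Lambda_H=\partial P$. By the criterion recalled after Theorem~\ref{thm:main} (Fioravanti--Hagen), the nonempty open $H$-invariant partition $\partial\HH^n\setminus\Lambda_H=\Omega_-\sqcup\Omega_+$ shows that $H$ is a codimension-one subgroup in Sageev's sense. Alternatively, one can verify directly that the Schreier graph has two ends, using the fact that $H\backslash\HH^n$ has two ends because $P$ separates $\HH^n$.

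Finally, apply Theorem~\ref{thm:main} to $\Gamma$ and $H$.

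The main point needing care is the second step: passing from "properly immersed totally geodesic subspace" to a cocompact stabilizer. Here we rely on compactness of $\Sigma$, which follows from properness and compactness of the ambient orbifold, together with the standard fact that a component of the lift of a properly immersed closed totally geodesic suborbifold is a hyperplane whose stabilizer acts on it with quotient $\Sigma$ (up to a finite cover). Torsion in $\Gamma$ causes no trouble, since Theorem~\ref{thm:main} allows it.
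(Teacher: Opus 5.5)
Your proposal is correct and follows the paper's proof essentially step for step. You lift to a hyperplane, use compactness of the immersion's domain to get a cocompact stabilizer with limit set $\partial P$, pass to the index-at-most-two side-preserving subgroup, and apply Theorem~\ref{thm:main}. One small wording fix: $P/H_0$ is compact because the compact domain $\Sigma$ surjects onto it, not because $P/H_0$ maps onto the compact image of $\Sigma$ in $\HH^n/\Gamma$.
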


If $\Gamma$ is a cocompact arithmetic lattice of simplest type, associated with a quadratic form $f$ over a totally real field $k$, then every $k$-rational hyperplane on which $f$ has signature $(n-1,1)$ yields a closed immersed totally geodesic hypersurface in $\HH^n/\Gamma$; see, for example, Belolipetsky et al.\ \cite{BBKS26} for a general account. Thus Corollary~\ref{cor:geodesic} applies. Since every arithmetic lattice in even dimensions is of simplest type, this covers all cocompact arithmetic lattices in even dimensions. Such hypersurface subgroups were also used by Bergeron--Haglund--Wise \cite{BHW11} to cubulate cocompact arithmetic lattices of simplest type.

Our method also applies to cocompact nonarithmetic orbifolds containing a closed totally geodesic hypersurface. These include the hybrids of Gromov--Piatetski-Shapiro \cite{GPS87}, Agol's short-systole construction \cite{Agol06} and its higher-dimensional extensions, obtained independently by Belolipetsky--Thomson \cite{BT11} and Bergeron--Haglund--Wise \cite[Section~9]{BHW11}, and the hybrids of Douba \cite{Douba25}. These examples include both quasi-arithmetic and non-quasi-arithmetic lattices; see Thomson \cite{Thomson16} and Douba \cite{Douba25}.

Cocompact hyperbolic reflection groups are also covered by Corollary~\ref{cor:geodesic}. Indeed, let $r\in W$ be a reflection in a cocompact reflection group $W<\Isom(\HH^n)$, with fixed hyperplane $\Pi$. Its centralizer $C_W(r)$ acts cocompactly on $\Pi$; see Raghunathan \cite[Theorem~1.13 and Lemma~1.14]{Rag72}. 

A proper cocompact cubulation also gives a quasiconvex codimension-one subgroup, by taking the side-preserving stabilizer of an essential hyperplane. We therefore obtain the following consequence.

\begin{corollary}\label{cor:cubulation}
Let $\Gamma<\Isom(\HH^n)$, $n\geq2$, be a cocompact lattice admitting a proper cocompact action on a $\mathrm{CAT}(0)$ cube complex. Then every finitely supported admissible probability measure $\mu$ on $\Gamma$ has hitting measure singular with respect to Lebesgue measure.
\end{corollary}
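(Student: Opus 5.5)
Corollary~\ref{cor:cubulation} will follow from Theorem~\ref{thm:main} once we exhibit a convex cocompact codimension-one subgroup $H<\Gamma$. Let $\Gamma$ act properly and cocompactly on a $\mathrm{CAT}(0)$ cube complex $X$.

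\emph{Choosing the hyperplane.} We may assume the action is essential: by the essential core theorem of Caprace--Sageev, one can pass to a $\Gamma$-invariant convex subcomplex on which the action is still proper and cocompact and every hyperplane is essential. Since $\Gamma$ is infinite, $X$ is unbounded, so $X$ has at least one hyperplane $\mathfrak h$. Let $\operatorname{Stab}(\mathfrak h)$ be its stabilizer, and let $H\le\operatorname{Stab}(\mathfrak h)$ be the subgroup of index at most two that preserves each of the two halfspaces $\mathfrak h^\pm$.

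\emph{Quasiconvexity.} Because the action on $X$ is proper and cocompact, $\operatorname{Stab}(\mathfrak h)$ acts cocompactly on the carrier $N(\mathfrak h)$. There are only finitely many $\Gamma$-orbits of hyperplanes, and $N(\mathfrak h)$ is a convex subcomplex, so it is quasi-isometrically embedded in $X$. Via the orbit map, which is a quasi-isometry $\Gamma\to X$, this makes $\operatorname{Stab}(\mathfrak h)$, and hence $H$, quasiconvex in the hyperbolic group $\Gamma$. By Bowditch \cite[Proposition~2.5.4]{Bow95}, as recalled after Theorem~\ref{thm:main}, $H$ is convex cocompact in $\Isom(\HH^n)$.

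\emph{Codimension one.} Fix a basepoint $x\in N(\mathfrak h)$. Essentiality means each halfspace $\mathfrak h^\pm$ contains points arbitrarily far from $N(\mathfrak h)$. Pull this back to $\operatorname{Cay}(\Gamma,S)$ by sending $g$ to $gx$. Elements $g$ with $gx\in\mathfrak h^+$ far from the carrier are separated from those with $gx\in\mathfrak h^-$ far from the carrier by a bounded neighborhood of $H$, since $H$ is cobounded on $N(\mathfrak h)$. Both families are $H$-invariant and project to unbounded sets in $H\backslash\operatorname{Cay}(\Gamma,S)$. Hence the Schreier graph has at least two ends. This is the standard fact, going back to Sageev \cite{Sageev95}, that stabilizers of essential hyperplanes are codimension-one subgroups.

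\emph{Conclusion and main obstacle.} Theorem~\ref{thm:main} now applies. The only delicate point is the reduction to an essential action; I would simply cite Caprace--Sageev for it rather than reprove it.
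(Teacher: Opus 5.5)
Your proposal is correct and follows the paper's proof: pass to an essential proper cocompact action (the paper cites Fioravanti--Hagen where you cite Caprace--Sageev), take the side-preserving subgroup $H$ of a hyperplane stabilizer, deduce quasiconvexity from cocompactness on the convex carrier, and apply Bowditch to get convex cocompactness. The only difference is in the last step. You verify Sageev's ends condition on the Schreier graph directly, via your separation argument. The paper instead builds the $H$-invariant partition of $\partial\Gamma\setminus\Lambda_H$ from the limit sets of the two halfspaces, citing Fioravanti--Hagen Lemmas~2.3 and~2.18. That partition is the form actually used in the proof of Theorem~\ref{thm:main}, and the two conditions are equivalent by the Fioravanti--Hagen remark quoted in the introduction.
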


Wise \cite[Conjecture~13.52]{Wise14} conjectured that every cocompact real hyperbolic lattice admits such an action. This remains open in higher dimensions, in particular for arithmetic lattices of the second type in odd dimensions at least five and the exceptional trialitarian arithmetic lattices in dimension seven; see Bregman--Incerti-Medici \cite[Introduction]{BIM26}. Even the existence of one quasiconvex codimension-one subgroup is unknown in general: for a cocompact arithmetic lattice, one such subgroup would already imply cubulation by Bergeron--Wise \cite[Theorem~4.1]{BW12}. Our argument shows that a positive answer to this existence question would prove the singularity conjecture for all cocompact real hyperbolic lattices. Together with the known nonuniform case, this would settle the conjecture for all real hyperbolic lattices. In particular, Wise's cubulation conjecture would imply the singularity conjecture.

\subsection*{Line of argument and organization}
Our proof extends the method of Kosenko--Tiozzo \cite{KT}. They express the Green kernel in terms of its restrictions to a neighborhood of a loxodromic axis and show that the associated Na\"im current is proportional to the Liouville current under the nonsingularity assumption. We replace the cyclic subgroup by $H$ and the axis by the convex hull of $\Lambda_H$.

The key new ingredient is a von Neumann dimension argument instead of the Fourier analysis used for the cyclic subgroup in \cite[Lemma~5]{KT}. Finite support and convex cocompactness provide an $H$-invariant set $A\subset\Gamma$ which every random-walk path between the two sides must visit and which consists of finitely many $H$-orbits. Thus $\ell^2(A)$ is a finite direct sum of copies of $\ell^2(H)$. Using the corresponding boundary-kernel identity, we construct a bounded $H$-equivariant operator from an $L^2$-space on a suitable invariant subset of one side to $\ell^2(A)$. Its source has infinite von Neumann $H$-dimension, whereas its target has finite von Neumann $H$-dimension, so the operator has a nonzero kernel. On the other hand, proportionality of the Na\"im and Liouville currents would force this operator to be injective. The latter follows from the injectivity of the relevant Liouville integral transform, which we prove using polynomial moments on a bounded domain. 

Section~\ref{sec:boundary} recalls the boundary facts and constructs the set $A$. Section~\ref{sec:green} establishes the identity expressing the Green kernel in terms of its restrictions to $A$ and passes to the boundary limit. Section~\ref{sec:current} identifies the Na\"im current and constructs a bounded $H$-equivariant operator with nonzero kernel by comparing von Neumann dimensions. The injectivity lemma in Section~\ref{sec:injectivity} gives a contradiction, completing the proof in Section~\ref{sec:proofs}.

\subsection*{Funding} This work was supported by NSERC Discovery Grant RGPIN-2024-05680.

\subsection*{AI use and ackowledgements} The key ideas belong to me, but I used large language models to develop technical arguments, search the literature, and check the proofs. I thank Peter Kosenko and Giulio Tiozzo for fruitful discussions and useful comments. I am also grateful to the organizers of the thematic program ``Randomness and Geometry'' that was held at the Fields Institute in Toronto in 2024; this program introduced me to random walks on groups. 

\section{Boundary measures and convex cocompact subgroups}\label{sec:boundary}

Fix $\Gamma$ and $H$ as in Theorem~\ref{thm:main}, and put $d=n-1$. Let $\mu_0$ be the initial measure on $\Gamma$ from Theorem~\ref{thm:main}. Replace $\mu_0$ by $\mu=\tfrac12(\delta_e+\mu_0)$, where $\delta_e$ is the Dirac mass at $e \in \Gamma$. The $\mu$-walk is a random time change of the $\mu_0$-walk and therefore has the same hitting measure. For a measure $\eta$ on $\Gamma$, write $\check\eta(g)=\eta(g^{-1})$. The same argument applies to $\check\mu=\tfrac12(\delta_e+\check\mu_0)$. Denote the hitting measures of $\mu$ and $\check\mu$ by $\nu$ and $\check\nu$, respectively. The following lemma is standard; see Gekhtman--Tiozzo \cite[Proof of Proposition~4.6 and Corollary~4.8]{GT}.

\begin{lemma}\label{lem:dichotomy}
The measure $\nu$ is either singular or equivalent to $\lambda$. If $\nu$ is equivalent to $\lambda$, then $\check\nu$ is equivalent to $\lambda$ as well.
\end{lemma}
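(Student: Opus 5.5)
The plan is to prove the two parts separately. The first part comes from ergodicity of $\lambda$ under $\Gamma$. The second comes from the Gekhtman--Tiozzo equivalence of absolute continuity with equality of drift and entropy.

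\emph{Dichotomy.} Write the Lebesgue decomposition $\nu=\nu_{ac}+\nu_s$ with respect to $\lambda$. The measure class of $\lambda$ is $\Gamma$-invariant, since isometries act conformally on $\partial\HH^n$. Consequently, pushing forward by $g\in\Gamma$ preserves both absolutely continuous and singular measures. Stationarity $\nu=\sum_g\mu(g)\,g_*\nu$ and uniqueness of the Lebesgue decomposition then give $\nu_{ac}=\sum_g\mu(g)\,g_*\nu_{ac}$, and similarly for $\nu_s$. If both parts are nonzero, normalizing them gives two distinct $\mu$-stationary probability measures. This contradicts uniqueness of the stationary measure (Kaimanovich). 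Hence $\nu$ is either singular or absolutely continuous.

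In the absolutely continuous case, write $\nu=\phi\lambda$. The set $\{\phi>0\}$ is $\Gamma$-quasi-invariant: stationarity together with admissibility shows it is invariant modulo $\lambda$-null sets, after iterating $\mu^{*k}$ so that every $g$ is reached. A lattice acts ergodically on $(\partial\HH^n,\lambda)$, by Moore ergodicity or Hopf's argument for the geodesic flow. So $\{\phi>0\}$ is null or conull. It is not null because $\nu$ is a probability measure, so $\nu\sim\lambda$.

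\emph{Symmetry.} I expect this to be the main step. The natural route is the fundamental inequality $h\le \ell v$, where $h$ is the Avez entropy, $\ell$ the drift with respect to the word metric (or the hyperbolic metric on an orbit, since $\Gamma$ is cocompact), and $v$ the volume growth. Blachère--Haïssinsky--Mathieu and Gekhtman--Tiozzo show that for finitely supported measures on hyperbolic groups, $\nu\sim\lambda$ (the Patterson--Sullivan class for the orbit metric) holds if and only if $h=\ell v$ in the hyperbolic metric $d_{\HH^n}$ restricted to the orbit $\Gamma o$. Both $h$ and $\ell$ are unchanged under $\mu\mapsto\check\mu$. For entropy, $H(\mu^{*k})=H(\check\mu^{*k})$. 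For drift, $d(o,g_1\cdots g_k o)=d(o,g_k^{-1}\cdots g_1^{-1}o)$ and the reversed product has the law of $\check\mu^{*k}$. Since $v=n-1$ is also unchanged, equality for $\mu$ implies equality for $\check\mu$, and so $\check\nu\sim\lambda$.

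The obstacle is justifying the equality criterion for the metric $d_{\HH^n}$. That metric is not a word metric, although it is a $\Gamma$-invariant hyperbolic metric quasi-isometric to one. This is exactly what \cite[Proposition~4.6, Corollary~4.8]{GT} cover, so I would invoke it directly. I would also check that replacing $\mu_0$ by the lazy measure $\frac12(\delta_e+\mu_0)$ keeps finite support and admissibility, which it does.
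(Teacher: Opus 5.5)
Your proposal is correct and is essentially the paper's route: the paper gives no argument of its own and cites the proof of Proposition~4.6 and Corollary~4.8 of Gekhtman--Tiozzo, which is also the source you invoke for the equality criterion, while your Lebesgue-decomposition and ergodicity argument for the dichotomy and your check that $h$ and $\ell$ are unchanged under $\mu\mapsto\check\mu$ spell out the standard reasoning behind that citation. The one caveat is attribution: the Blach\`ere--Ha\"issinsky--Mathieu equality criterion assumes $\mu$ symmetric, so for the nonsymmetric lazy measure the direction $h=\ell v\Rightarrow\check\nu\sim\lambda$ has to come from the Gekhtman--Tiozzo reference, as you ultimately propose.
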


Suppose from now on, towards a contradiction, that $\nu$ is not singular. By Lemma~\ref{lem:dichotomy}, we have $\nu\sim\lambda\sim\check\nu$.

Since $H$ is convex cocompact and codimension-one, choose a partition $\partial\HH^n\setminus\Lambda_H=\Omega_-\sqcup\Omega_+$ into nonempty open $H$-invariant sets, as in Fioravanti--Hagen \cite[Definition~2.19 and Remark~2.21]{FH21}. Put $Q=\Hull(\Lambda_H)$.

By Selberg's lemma, we may replace $H$ by a torsion-free subgroup of finite index. This preserves the limit set, convex cocompactness and invariance of $\Omega_\pm$. The action of $H$ on each $\Omega_\pm$ is properly discontinuous and, since $H$ is torsion-free, free.

Put $L=\max_{s\in\supp\mu}d(o,so)$ and $A=\{a\in\Gamma:d(ao,Q)\leq L+1\}$. The set $A$ is invariant under left multiplication by $H$. Since $H$ acts cocompactly on $Q$ and $\Gamma$ acts properly on $\HH^n$, the quotient $H\backslash A$ is finite. Thus
\begin{equation}\label{eq:finite-orbits}
A=\bigsqcup_{j=1}^N Hb_j.
\end{equation}
A {\em random-walk path} is a finite sequence $g_0,\ldots,g_m\in\Gamma$ such that $g_{i-1}^{-1}g_i\in\supp\mu$ for $1\leq i\leq m$.
\begin{lemma}\label{lem:crossing}
Let $x_k,y_k\in\Gamma$ satisfy $x_ko\to\xi\in\Omega_-$ and $y_ko\to\eta\in\Omega_+$. For all sufficiently large $k$, these two points lie in different components of $\HH^n\setminus Q$, and every random-walk path from $x_k$ to $y_k$ visits $A$.
\end{lemma}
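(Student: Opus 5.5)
We first describe how the complement of $Q$ splits. Since $Q=\Hull(\Lambda_H)$ is closed and convex, every $x\in\HH^n\setminus Q$ has a nearest-point projection $\pi(x)\in Q$. The geodesic ray from $\pi(x)$ through $x$ is orthogonal to $Q$, so it stays outside $Q$ and ends at a point $e(x)\in\partial\HH^n\setminus\Lambda_H$. The map $x\mapsto e(x)$ is continuous, since $\pi$ is continuous. Each fibre of $e$ is connected, being a union of segments $[\pi(x),x)$ leaving $Q$. It follows that $e$ induces a bijection between the components of $\HH^n\setminus Q$ and those of $\partial\HH^n\setminus\Lambda_H$. Define
\[
U_\pm=e^{-1}(\Omega_\pm).
\]
These are disjoint open sets with $\HH^n\setminus Q=U_-\sqcup U_+$. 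Any path in $\HH^n$ joining $U_-$ to $U_+$ must meet $Q$, because $U_\pm$ are open and disjoint.

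Next we check that $x_ko$ and $y_ko$ lie on opposite sides for large $k$. Take a small closed ball $B$ in $\overline{\HH^n}$ around $\xi$ with $B\cap\partial\HH^n\subset\Omega_-$ and $B\cap Q=\emptyset$. This is possible because $\xi\notin\overline Q\cap\partial\HH^n=\Lambda_H$ and $\overline Q$ is closed in the compactification. One can take $B$ to be a half-space bounded by a hyperplane separating $\xi$ from $\overline Q$. Then $B\cap\HH^n$ is connected and misses $Q$. Its boundary points map under $e$ into $\Omega_-$: for $x\in B$ the orthogonal ray from $Q$ through $x$ crosses the separating hyperplane and then stays in $B$. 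Hence $B\cap\HH^n\subset U_-$. For large $k$ we have $x_ko\in B$, so $x_ko\in U_-$, and symmetrically $y_ko\in U_+$. This proves the first assertion. The main delicate point is this identification of the side via the endpoint map; the separating half-space argument makes it clean.

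Finally, let $x_k=g_0,\dots,g_m=y_k$ be a random-walk path, and form the piecewise geodesic $\gamma$ through $g_0o,\dots,g_mo$. Each segment $[g_{i-1}o,g_io]$ has length
\[
d(g_{i-1}o,g_io)=d(o,g_{i-1}^{-1}g_io)\le L,
\]
since $g_{i-1}^{-1}g_i\in\supp\mu$. Because $\gamma$ joins $U_-$ to $U_+$, it meets $Q$ at some point $p$ lying on some segment $[g_{i-1}o,g_io]$. Then
\[
d(g_{i-1}o,Q)\le d(g_{i-1}o,p)\le L\le L+1,
\]
so $g_{i-1}\in A$. Thus the path visits $A$, which proves the lemma.
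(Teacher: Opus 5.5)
Your proof is correct and has the same structure as the paper's. Both arguments use a continuous map $\HH^n\setminus Q\to\partial\HH^n\setminus\Lambda_H$, pull back the open partition $\Omega_-\sqcup\Omega_+$, place $x_ko$ and $y_ko$ on opposite sides, and interpolate the random-walk path by geodesic segments of length at most $L$.

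The only real difference is the projection. The paper uses radial projection $r_p$ from a fixed point $p\in Q$ in the Klein model, i.e.\ the endpoint of the geodesic ray from $p$ through $x$. This map is obviously continuous and extends continuously to the identity on $\partial\HH^n$. So $r_p(x_ko)\to\xi$ is immediate, and your separating half-space argument is not needed. The one thing to check for $r_p$ is that $r_p(x)\notin\Lambda_H$. This holds because, by convexity, a ray from a point of $Q$ to a point of $\Lambda_H$ lies in $Q$.

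You use this same fact silently when you pass from ``the normal ray stays outside $Q$'' to $e(x)\in\partial\HH^n\setminus\Lambda_H$; it deserves a sentence. Your nearest-point map is more canonical, and it is $H$-equivariant. That buys nothing here, but it costs you the half-space step. The half-space step itself is fine: choose a small closed cap around $\xi$ inside $\Omega_-$, and the corresponding half-space automatically misses $Q$ because $\Lambda_H$ is compact and disjoint from the cap.

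Finally, the claims that the fibres of $e$ are connected and that $e$ induces a bijection on components are never used. Openness and disjointness of $U_\pm=e^{-1}(\Omega_\pm)$ follow from continuity of $e$ alone, and so does the first assertion of the lemma. Moreover, your stated reason (``a union of segments'') does not by itself give connectedness in $\HH^n$. The correct reason is that, for $\zeta\notin\Lambda_H$, exactly one point of $Q$ has normal ray ending at $\zeta$, namely the unique minimizer on $Q$ of the Busemann function $\beta_\zeta$. You can simply delete those two sentences.
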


\begin{proof}
Use the Klein ball model and choose $p\in Q$. For $x\notin Q$, let $r_p(x)$ be the endpoint on the boundary sphere of the Euclidean ray from $p$ through $x$. If $r_p(x)$ belonged to $\Lambda_H$, convexity would give $[p,r_p(x))\subset Q$, contrary to $x\notin Q$. Thus radial projection defines a continuous map from $\HH^n\setminus Q$ to $\partial\HH^n\setminus\Lambda_H$.

The ideal boundary of $Q$ is $\Lambda_H$. Hence $x_k o$ and $y_k o$ eventually lie outside $Q$, and their radial projections tend to $\xi$ and $\eta$. For large $k$ the projections lie in $\Omega_-$ and $\Omega_+$, respectively. A path in $\HH^n\setminus Q$ cannot connect the two points: its radial image is connected and cannot meet both parts of this open partition.

Interpolate the orbit points of a random-walk path by geodesic segments. Each segment has length at most $L$. The interpolated path must meet $Q$, so an endpoint of one of its segments is within distance $L$ of $Q$. The corresponding group element belongs to $A$.
\end{proof}

By \eqref{eq:finite-orbits}, the accumulation set of $Ao$ at infinity is exactly $\Lambda_H$.

\section{The Green operator and its boundary factorization}\label{sec:green}

Define the Markov operator and Green kernel by
$$
(Pf)(x)=\sum_s\mu(s)f(xs),\qquad
G(x,y)=\sum_{k\geq0}\mu^{*k}(x^{-1}y).
$$
The lattice $\Gamma$ contains a nonabelian free subgroup and is therefore nonamenable. The positive self-adjoint operator $P^*P$ is convolution by the symmetric probability measure $\check\mu*\mu$. Since $\mu(e)>0$, its support contains $\supp\mu_0$ and $(\supp\mu_0)^{-1}$ and generates $\Gamma$. Kesten's criterion \cite{Kesten} gives $\|P\|^2=\|P^*P\|<1$. Consequently $G=(I-P)^{-1}=\sum_{k\geq0}P^k$ is bounded on $\ell^2(\Gamma)$ and has matrix $G(x,y)$. 

For $v\in\ell^2(\Gamma)$, put $w=Gv$, so that $v=w-Pw$. Then
$$
2\operatorname{Re}\langle Gv,v\rangle
=\|v\|^2+\|w\|^2-\|Pw\|^2\geq\|v\|^2.
$$
The compression $G_A:\ell^2(A)\to\ell^2(A)$, with matrix $(G(a,b))_{a,b\in A}$, satisfies the same inequality. It is bounded below and has closed range. Its adjoint has the same real quadratic form and is also bounded below, so the range of $G_A$ is dense. Thus $G_A$ is boundedly invertible; put $M_A=G_A^{-1}$. Both operators commute with left $H$-translations and preserve real-valued vectors. This is the coercivity argument of \cite[Lemmas~2--3]{KT}.

\begin{lemma}\label{lem:green-factor}
If every random-walk path from $x$ to $y$ visits $A$, then
\begin{equation}\label{eq:green-factor}
G(x,y)=G(x,A)M_A G(A,y).
\end{equation}
Here $G(x,A)=(G(x,a))_{a\in A}$ is a row vector and $G(A,y)=(G(a,y))_{a\in A}$ is a column vector. Both belong to $\ell^2(A)$, so the right-hand side is well defined.
\end{lemma}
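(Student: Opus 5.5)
Since $G$ is bounded on $\ell^2(\Gamma)$ with matrix $G(x,y)$, the vectors $G(\cdot,y)=G\delta_y$ and $G(x,\cdot)=G^*\delta_x$ are square-summable. Their restrictions $G(A,y)$ and $G(x,A)$ therefore lie in $\ell^2(A)$, and the right-hand side of \eqref{eq:green-factor} is a well-defined finite pairing because $M_A$ is bounded. The plan is to prove \eqref{eq:green-factor} in the equivalent form
\[
G(A,y)=G_A\,u,\qquad G(x,y)=\langle G(x,A),u\rangle
\]
for a suitable vector $u\in\ell^2(A)$. Once this holds, $u=M_AG(A,y)$, which gives the formula.

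\textbf{Step 1: a last-exit decomposition.} Define $F(a,y)$ for $a\in A$ as the sum, over random-walk paths $a=g_0,\dots,g_m=y$ with $g_i\notin A$ for $1\le i\le m$, of the products $\prod\mu(g_{i-1}^{-1}g_i)$. When $y\in A$ we allow only the trivial path with $a=y$, so $F(a,y)=\delta_{a,y}$. Take any path from $x$ to $y$. By hypothesis it visits $A$, so we can split it at its last visit to $A$. This yields
\[
G(x,y)=\sum_{a\in A}G(x,a)F(a,y),
\]
and the same decomposition with $x$ replaced by any $b\in A$ gives
\[
G(b,y)=\sum_{a\in A}G(b,a)F(a,y),
\]
since every path from $b$ visits $A$ at time $0$. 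All terms are nonnegative, so these identities hold as identities in $[0,\infty]$.

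\textbf{Step 2: show that $u=F(\cdot,y)$ lies in $\ell^2(A)$.} This is the main obstacle. The sum defining $F(a,y)$ is dominated by $G(a,y)$, but we need square-summability in $a$ rather than in $y$, so a different bound is required. Reversing paths, $F(a,y)$ is a sum over paths from $y$ to $a$ for the reversed walk $\check\mu$, where $a$ is the first visit to $A$ and the path stays outside $A$ before that. Hence $\sum_a F(a,y)\le\sum_a\check G(y,a)$, and I would bound
\[
F(a,y)\le \check G(y,a)=G(a,y).
\]
So $u$ is dominated pointwise by $G(A,y)\in\ell^2(A)$, and $u\in\ell^2(A)$. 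The point to check carefully is that the path weights of the reversed walk match, which holds because $\check\mu(g^{-1})=\mu(g)$.

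\textbf{Step 3: conclude.} Since $u\in\ell^2(A)$ and $G_A$ is bounded, the identity $G(A,y)=G_Au$ is an equality of $\ell^2$ vectors. Applying the bounded inverse gives $u=M_AG(A,y)$. Pairing with $G(x,A)\in\ell^2(A)$ gives
\[
G(x,y)=\langle G(x,A),u\rangle=G(x,A)M_AG(A,y).
\]
All interchanges of sums are justified by nonnegativity in Step 1 and by absolute convergence in $\ell^2$ afterwards.
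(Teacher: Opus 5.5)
Your proof is correct and is essentially the paper's argument read in the opposite direction. The paper decomposes at the first entrance into $A$ from $x$, writes $G(x,A)=F_xG_A$ with $F_x\in\ell^1(A)$ the hitting distribution, and solves $F_x=G(x,A)M_A$; you decompose at the last exit from $A$ before $y$ and solve $u=M_AG(A,y)$.

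The worry in your Step 2 is unfounded. The trivial bound $F(a,y)\le G(a,y)$, with $y$ fixed, already dominates $u$ by the column $G\delta_y$, so it gives square-summability in $a$ directly and the reversal detour is not needed. Your reversal does show something sharper: $u$ is the $\check\mu$-hitting distribution of $A$ from $y$, hence lies in $\ell^1(A)$, exactly mirroring the paper's $F_x$.
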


\begin{proof}
Let $F_x(a)$ be the probability that the first visit to $A$ occurs at $a$, allowing a visit at time zero. Then $F_x\in\ell^1(A)\subset\ell^2(A)$. The strong Markov property gives $G(x,A)=F_xG_A$ coordinatewise. Every row and column of $G$ belongs to $\ell^2$, so this is also an equality of $\ell^2$ row vectors, and $F_x=G(x,A)M_A$. A second application of the strong Markov property gives $G(x,y)=F_x G(A,y)$.
\end{proof}

The group $\Gamma$ is word hyperbolic, with Gromov boundary $\partial\HH^n$. We use Ancona inequalities and the identification of the Martin boundary without assuming symmetry. Indeed, Gou\"ezel's theorem \cite[Theorem~1.2]{Gouezel15} applies: finite support gives superexponential tails, and the remaining hypothesis is that $\sum_{k\geq0}r^k\mu^{*k}(x^{-1}y)<\infty$ for some $r>1$ and all $x,y\in\Gamma$. This follows by choosing $1<r<\|P\|^{-1}$, for which $\sum_{k\geq0}r^kP^k$ converges in operator norm. Strong Ancona inequalities are established in \cite[Section~4.2 and Theorem~4.10]{Gouezel15}.

Our conventions for the Martin kernels are
$$
K_\mu(a,\eta)=\lim_{y\to\eta}\frac{G(a,y)}{G(e,y)}
=\frac{d(a_*\nu)}{d\nu}(\eta),
$$
and similarly for $\check\mu$. The Na\"im kernel is
$$
\Theta(\xi,\eta)=
\lim_{x\to\xi,\,y\to\eta}
\frac{G(x,y)}{G(x,e)G(e,y)},\qquad \xi\ne\eta.
$$
It is positive and continuous off the diagonal. Its existence, transformation rule and associated invariant current for finitely supported nonsymmetric walks are recorded by Cantrell--Tanaka \cite[Section~6]{CT24}. The following lemma extends Kosenko--Tiozzo \cite[Lemma~7]{KT}.

\begin{lemma}\label{lem:boundary-factor}
For $\xi\in\Omega_-$ and $\eta\in\Omega_+$, the vectors $u(\xi)=(K_{\check\mu}(a,\xi))_{a\in A}$ and $v(\eta)=(K_\mu(a,\eta))_{a\in A}$ belong to $\ell^2(A)$, and
\begin{equation}\label{eq:boundary-factor}
\Theta(\xi,\eta)=u(\xi)^t M_Av(\eta).
\end{equation}
\end{lemma}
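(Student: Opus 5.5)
Take $x_k,y_k\in\Gamma$ with $x_ko\to\xi$ and $y_ko\to\eta$, and pass to the limit in Lemma~\ref{lem:green-factor}. By Lemma~\ref{lem:crossing}, for large $k$ every random-walk path from $x_k$ to $y_k$ visits $A$, so \eqref{eq:green-factor} holds. Dividing by $G(x_k,e)G(e,y_k)$ gives
$$
\frac{G(x_k,y_k)}{G(x_k,e)G(e,y_k)}
=\Bigl(\frac{G(x_k,A)}{G(x_k,e)}\Bigr)M_A\Bigl(\frac{G(A,y_k)}{G(e,y_k)}\Bigr).
$$
The left side tends to $\Theta(\xi,\eta)$ by definition of the Na\"im kernel. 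Since $G_\mu(x,a)=G_{\check\mu}(a^{-1}\cdot,\ldots)$, more precisely $G(x,a)=\check G(a,x)$ where $\check G$ is the Green kernel of $\check\mu$, the row vector $G(x_k,A)/G(x_k,e)$ converges coordinatewise to $u(\xi)$. Likewise $G(A,y_k)/G(e,y_k)\to v(\eta)$ coordinatewise. It therefore suffices to upgrade coordinatewise convergence to convergence in $\ell^2(A)$; this also shows $u(\xi),v(\eta)\in\ell^2(A)$, and continuity of the bilinear form $(u,v)\mapsto u^tM_Av$ finishes the argument.

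The main step is an $\ell^2$ domination, uniform in $k$, so that dominated convergence applies. Consider $v$ first. Since $\eta\in\Omega_+$ is at positive distance from $\Lambda_H$, fix a neighborhood $U$ of $\eta$ in $\overline{\HH^n}$ whose closure misses $\overline{Q}$, with $y_ko\in U$ for large $k$. For $a\in A$, the point $ao$ lies within $L+1$ of $Q$. By hyperbolic geometry, the geodesic from $ao$ to $y_ko$ passes within a uniform distance $C$ of a point $p_a$ on a fixed compact part of $Q$ near the "projection" of $U$. Concretely, I would take the nearest-point projection of $U$ to $Q$: it is bounded, and the geodesic from $ao$ to $y_ko$ passes $C$-close to $\pi_Q(y_ko)$, which stays in a compact set $B$. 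Ancona's inequality (Gou\"ezel) then gives
$$
G(a,y_k)\asymp G(a,z)G(z,y_k)
$$
for some $z$ in the finite set $\{z:zo\in N_C(B)\}$. Applying Ancona again to $G(e,y_k)\asymp G(e,z')G(z',y_k)$, and using a Harnack comparison among the finitely many such $z$, we obtain
$$
\frac{G(a,y_k)}{G(e,y_k)}\leq C'\max_{z\in Z}G(a,z),
$$
with $Z$ finite and independent of $k$. Each column $G(\cdot,z)$ lies in $\ell^2(\Gamma)$, since $G$ is bounded on $\ell^2$. Hence the dominating function lies in $\ell^2(A)$, and dominated convergence yields $\ell^2$ convergence to $v(\eta)$.

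The argument for $u(\xi)$ is symmetric, using $\check\mu$, rows of $G$, and $\xi\in\Omega_-$ at positive distance from $\Lambda_H$. I expect the main obstacle to be the geometric claim that geodesics from points of $N_{L+1}(Q)$ to points near $\eta$ pass boundedly close to a fixed compact set. To prove it, I would use that nearest-point projection to the convex set $Q$ is coarsely contracting: the geodesic $[ao,y_ko]$ passes within a uniform constant of $\pi_Q(y_ko)$ whenever $d(ao,\pi_Q(y_ko))$ is large, and otherwise $ao$ itself lies in a bounded set. Moreover $\pi_Q(U)$ is bounded because $\overline U$ misses $\Lambda_H$. A fine point is that Ancona's inequalities are stated for points on (quasi-)geodesics in the Cayley graph; the quasi-isometry $\Gamma\to\HH^n$ transfers them.
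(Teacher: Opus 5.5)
Your proposal is correct and follows the paper's proof. You apply Lemma~\ref{lem:green-factor} for large $k$, normalize by $G(x_k,e)G(e,y_k)$, and dominate the row and column vectors in $\ell^2(A)$ using Ancona inequalities together with the fact that rows and columns of $G$ lie in $\ell^2$. You then conclude by dominated convergence and boundedness of $M_A$.

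The only variation is in how the pinch point is found. The paper observes that $(y_k\mid a)_e$ is uniformly bounded over $a\in A$, because $A$ accumulates only on $\Lambda_H$. So $e$ itself works, and the single column $G(\cdot,e)$ dominates. Your nearest-point projection onto $Q$ instead produces pinch points in a finite set $Z$. For the denominator you only need the trivial bound $G(e,y_k)\geq G(e,z)G(z,y_k)/G(e,e)$. Ancona is not the right justification there, since $o$ need not lie near $Q$.
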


\begin{proof}
Choose sequences $x_k\to\xi$ and $y_k\to\eta$ in the boundary compactification of $\Gamma$. By Lemma~\ref{lem:crossing}, the Green factorization holds for large $k$. Divide it by $G(x_k,e)G(e,y_k)$. The row and column coordinates converge to $u(\xi)$ and $v(\eta)$, respectively; for the row, use $G_{\check\mu}(a,x)=G(x,a)$.

We prove convergence in $\ell^2(A)$. Use a word metric on $\Gamma$. Since the accumulation set of $A$ is $\Lambda_H$ and $\xi\notin\Lambda_H$, the Gromov products $(x_k\mid a)_e$ are uniformly bounded for $a\in A$ and all sufficiently large $k$. Otherwise some sequence in $A$ would converge to $\xi$. Hence $e$ is within a fixed distance of every geodesic segment from $x_k$ to $a$. Ancona inequalities and Harnack comparison over that bounded distance give
$$
\frac{G(x_k,a)}{G(x_k,e)}\leq C_\xi G(e,a),\qquad
\frac{G(a,y_k)}{G(e,y_k)}\leq C_\eta G(a,e).
$$
The dominating vectors belong to $\ell^2(A)$, since the full row and column are $G^*\delta_e$ and $G\delta_e$, respectively. Dominated convergence gives both limits in $\ell^2(A)$. Boundedness of $M_A$ then permits passage to the limit in \eqref{eq:green-factor}, proving \eqref{eq:boundary-factor}.
\end{proof}

\section{Boundary currents and equivariant operators}\label{sec:current}

On ordered pairs of distinct boundary points, the Na\"im current $dJ(\xi,\eta)=\Theta(\xi,\eta)\,d\check\nu(\xi)\,d\nu(\eta)$ is $\Gamma$-invariant; see \cite[Section~6]{CT24}. In stereographic coordinates on $\partial\HH^n\setminus\{\infty\}\cong\RR^d$, the Liouville current is, up to normalization,
$$
d\mathcal L(z,w)=\frac{dz\,dw}{|z-w|^{2d}}.
$$
It is invariant under all isometries of $\HH^n$, including orientation-reversing ones.

Since $\nu\sim\lambda\sim\check\nu$ and $\Theta$ is positive, the currents $J$ and $\mathcal L$ are equivalent. Their Radon--Nikodym derivative is $\Gamma$-invariant. In Hopf coordinates it defines a geodesic-flow-invariant measurable function on the unit tangent bundle of the compact quotient. Hopf ergodicity makes it constant, so $J=c\mathcal L$ for some $c>0$. For an orbifold quotient, apply Hopf ergodicity on a torsion-free finite cover. This identification follows \cite[Lemma~4]{KT}.

\begin{lemma}\label{lem:reference-measure}
For $\Omega=\Omega_-$ or $\Omega_+$, the measure $m=\sum_{h\in H}h_*(\lambda|_\Omega)$ is $H$-invariant, non-atomic, $\sigma$-finite and equivalent to $\lambda|_\Omega$, with $m\geq\lambda|_\Omega$. Every Borel fundamental domain $F\subset\Omega$ satisfies $m(F)=\lambda(\Omega)<\infty$.
\end{lemma}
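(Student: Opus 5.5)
The plan is to verify each property of $m=\sum_{h\in H}h_*(\lambda|_\Omega)$ directly, using two facts fixed earlier: $H$ is countable, and $H$ acts on $\Omega$ freely and properly discontinuously, since $H$ was made torsion-free via Selberg's lemma. Because $\Omega$ is $H$-invariant, each $h_*(\lambda|_\Omega)$ is supported on $\Omega$. So $m$ is a countable sum of Borel measures on $\Omega$, hence a Borel measure there.

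\emph{Invariance.} For $g\in H$ we have $g_*m=\sum_h (gh)_*(\lambda|_\Omega)=m$ by reindexing.

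\emph{Comparison with $\lambda|_\Omega$.} The term $h=e$ gives $m\geq\lambda|_\Omega$. Conversely, each $h\in H$ is a Möbius transformation of $\partial\HH^n$, so $h_*\lambda$ has a smooth positive density with respect to $\lambda$. Hence $h_*(\lambda|_\Omega)\ll\lambda|_\Omega$ for every $h$, and a countable sum of such measures is still $\ll\lambda|_\Omega$. Together these give $m\sim\lambda|_\Omega$.

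\emph{Non-atomic.} Each $h_*\lambda$ is non-atomic, and a point has measure zero for each term, so it has measure zero for the countable sum.

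\emph{$\sigma$-finiteness.} This is the step needing proper discontinuity. Let $K\subset\Omega$ be compact. Proper discontinuity says that only finitely many $h\in H$ satisfy $h^{-1}K\cap K\neq\emptyset$, but that is not the relevant condition here. What we need is a bound on
\[
m(K)=\sum_h\lambda(\Omega\cap h^{-1}K)\le\sum_h\lambda(h^{-1}K).
\]
I would use the finite-measure identity for fundamental domains proved below. Cover $K$ by finitely many translates of pieces of a fundamental domain; more cleanly, note that $m(K)\le m(HK)$ and $HK=\bigcup_{g\in H} gK$. Then choose a Borel fundamental domain $F$ and observe that $K$ meets only finitely many translates $gF$. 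This follows because the quotient map is a covering and $K$ is compact, so $K$ is covered by finitely many evenly covered open sets, each meeting the fibers over a small set only once per sheet. Thus $m(K)\le\sum_{g\text{ finite}} m(gF)=(\#)\,\lambda(\Omega)<\infty$. Since $\Omega$ is open, it is a countable union of compacts, and $\sigma$-finiteness follows. Alternatively, the fundamental-domain identity alone gives $m(hF)=m(F)<\infty$, and $\Omega=\bigsqcup_h hF$ is a countable disjoint union. This second route is cleaner and avoids the compactness argument altogether, so I would use it.

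\emph{Fundamental domain identity.} A Borel fundamental domain exists because the action is free and properly discontinuous on a second countable space, so one can pick a Borel section of the covering. By Tonelli,
\[
m(F)=\sum_h\lambda(\Omega\cap h^{-1}F).
\]
The sets $h^{-1}F$, $h\in H$, partition $\Omega$ disjointly, so the sum equals $\lambda(\Omega)\le1$.

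The only genuinely delicate point is the existence of a Borel fundamental domain. I would handle it by choosing countably many open sets $U_i$ that are evenly covered and cover $\Omega$, and setting $F=\bigcup_i\bigl(U_i\setminus H(U_1\cup\dots\cup U_{i-1})\bigr)$, after shrinking each $U_i$ so that its $H$-translates are disjoint. Everything else is formal.
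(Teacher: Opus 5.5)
Your proof is correct and follows the paper's argument. You get invariance by reindexing, equivalence from the $h=e$ term together with quasi-invariance of the visual measure class, $m(F)=\lambda(\Omega)$ from the partition $\Omega=\bigsqcup_h h^{-1}F$, and $\sigma$-finiteness from the equal-mass translates of $F$. That last route is the right one; the compactness route you abandoned would have needed a locally finite $F$, since an arbitrary Borel fundamental domain can meet a compact set in infinitely many translates. Your explicit construction of a Borel fundamental domain from open sets with pairwise disjoint $H$-translates fills in a step the paper only asserts.
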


\begin{proof}
The free and properly discontinuous action admits a Borel fundamental domain $F$. Reindexing the sum proves invariance, and the partition of $\Omega$ by the sets $h^{-1}F$ gives $m(F)=\sum_{h\in H}\lambda(h^{-1}F)=\lambda(\Omega)$. The translates of $F$ cover $\Omega$ and have the same finite mass, proving $\sigma$-finiteness. Every summand is non-atomic and absolutely continuous with respect to $\lambda|_\Omega$, since isometries of $\HH^n$ preserve the visual measure class. The term $h=e$ gives $m\geq\lambda|_\Omega$ and the reverse absolute continuity.
\end{proof}

Denote these measures by $m_-$ and $m_+$. Choose stereographic coordinates with $\infty\in\Omega_-$, so that $\Omega_+$ has bounded closure in $\RR^d$. Write $dm_\pm(z)=p_\pm(z)\,dz$ on the finite part of each domain. The densities are positive and finite almost everywhere. The smooth visual density has a positive lower bound on $\Omega_+$. Lemma~\ref{lem:reference-measure} therefore gives $p_+\geq c_0>0$ almost everywhere, and consequently $L^2(\Omega_+,m_+)\subset L^2(\Omega_+,dw)\subset L^1(\Omega_+,dw)$.

For $a\in A$, define
$$
k_a^+(w)=\frac{d(a_*\nu)|_{\Omega_+}}{dm_+}(w),\qquad
k_a^-(z)=\frac{d(a_*\check\nu)|_{\Omega_-}}{dm_-}(z).
$$
These functions are nonnegative and their integrals are at most one. Invariance of $m_\pm$ gives
\begin{equation}\label{eq:k-equiv}
k_{ha}^\pm(z)=k_a^\pm(h^{-1}z)\qquad(h\in H).
\end{equation}
All the countably many almost-everywhere identities can be imposed on a common invariant conull set.

Let $r_+=d\nu/dm_+$ and $r_-=d\check\nu/dm_-$ on their respective domains. They are positive and finite almost everywhere. The Martin identities give $k^+(w)=r_+(w)v(w)$ and $k^-(z)=r_-(z)u(z)$, where $k^\pm$ denotes the vector indexed by $A$. By Lemma~\ref{lem:boundary-factor}, these vectors belong to $\ell^2(A)$ almost everywhere. Multiplying \eqref{eq:boundary-factor} by the hitting densities and using $J=c\mathcal L$ gives
\begin{equation}\label{eq:current-factor}
k^-(z)^tM_Ak^+(w)
=\frac{c}{p_-(z)p_+(w)|z-w|^{2d}}
\end{equation}
for almost every $(z,w)\in\Omega_-\times\Omega_+$.

\subsection*{A bounded equivariant operator}

Following Kosenko--Tiozzo \cite[Lemma~8]{KT}, put $S(w)=\sum_{a\in A}k_a^+(w)$. This function is $H$-invariant by \eqref{eq:k-equiv}. Choose a Borel fundamental domain $F\subset\Omega_+$.

Tonelli's theorem, \eqref{eq:k-equiv}, and \eqref{eq:finite-orbits} give
$$
\begin{aligned}
\int_F S\,dm_+
&=\sum_{j=1}^N\sum_{h\in H}\int_F k_{hb_j}^+\,dm_+ =\sum_{j=1}^N\sum_{h\in H}\int_{h^{-1}F}k_{b_j}^+\,dm_+\\
&=\sum_{j=1}^N\nu(b_j^{-1}\Omega_+)\leq N.
\end{aligned}
$$
Thus $S$ is finite almost everywhere. Since $m_+(F)=\lambda(\Omega_+)>0$, there is $M<\infty$ such that $F_M=F\cap\{S\leq M\}$ has positive measure. The set $E=\bigcup_{h\in H}hF_M$ is invariant and satisfies $S\leq M$ almost everywhere.

Define $R:L^2(E,m_+)\to\ell^2(A)$ by
$$
(Rf)(a)=\int_E f(w)k_a^+(w)\,dm_+(w).
$$
Each $k_a^+|_E$ is square-integrable because $\int_E(k_a^+)^2\,dm_+\leq M\int_E k_a^+\,dm_+\leq M$. Weighted Cauchy--Schwarz and Tonelli's theorem give
$$
\begin{aligned}
\|Rf\|_2^2
&\leq\sum_{a\in A}
\left(\int_E|f|^2k_a^+\,dm_+\right) \left(\int_Ek_a^+\,dm_+\right)\\
&\leq\int_E|f|^2S\,dm_+ \leq M\|f\|_{L^2(E,m_+)}^2,
\end{aligned}
$$
so $R$ is bounded.

The actions $(U_hf)(w)=f(h^{-1}w)$ and $(\lambda_hq)(a)=q(h^{-1}a)$ are unitary. A change of variables using \eqref{eq:k-equiv} gives $RU_h=\lambda_hR$. The adjoint is $R^*q=\sum_{a\in A}q(a)k_a^+|_E$, with convergence in $L^2(E,m_+)$.

\subsection*{The von Neumann trace argument}

The von Neumann dimension of a closed invariant subspace of a Hilbert direct sum of regular representations is the canonical trace of its orthogonal projection, normalized by $\dim_H\ell^2(H)=1$; see L\"uck \cite{Luck02}.

\begin{lemma}\label{lem:finite-trace}
For every discrete group $H$ and every integer $N\geq1$, a bounded $H$-equivariant operator $T:\ell^2(H)^{N+1}\to\ell^2(H)^N$ has a nonzero kernel.
\end{lemma}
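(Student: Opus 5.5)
We argue by contradiction using the standard properties of von Neumann dimension for Hilbert $H$-modules, as in L\"uck \cite{Luck02}. Suppose $\ker T=0$. The tool is the polar decomposition $T=V|T|$. The operator $|T|=(T^*T)^{1/2}$ is $H$-equivariant, since it is a Borel function of $T^*T$ and hence lies in the commutant of the $H$-action, that is, in $M_{N+1}(\mathcal N(H))$. The partial isometry $V$ is equivariant for the same reason. Injectivity of $T$ gives $\ker|T|=\ker T=0$. Consequently $V$ is an isometry from all of $\ell^2(H)^{N+1}$ onto the closure of the range of $T$.

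We then compare traces. The operator $V^*V=I_{N+1}$ is the identity on $\ell^2(H)^{N+1}$, so its canonical trace is
\[
\sum_{i=1}^{N+1}\langle \delta_e^{(i)},\delta_e^{(i)}\rangle=N+1.
\]
The projection $VV^*$ onto $\overline{\operatorname{ran}T}\subset\ell^2(H)^N$ is an equivariant projection on an $N$-fold sum. Positivity and faithfulness of the trace therefore give $\operatorname{tr}(VV^*)\le\operatorname{tr}(I_N)=N$.

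The key step is the trace property $\operatorname{tr}(V^*V)=\operatorname{tr}(VV^*)$ for equivariant operators between different finite sums. To make this concrete I would work with matrices: write $V=(v_{ij})$ as an $N\times(N+1)$ matrix with entries in $\mathcal N(H)$. Each entry commutes with the $H$-action and is determined by the vector $v_{ij}\delta_e\in\ell^2(H)$. Then
\[
\operatorname{tr}(V^*V)=\sum_{i,j}\operatorname{tr}_H(v_{ij}^*v_{ij})=\sum_{i,j}\|v_{ij}\delta_e\|^2 .
\]
Likewise $\operatorname{tr}(VV^*)=\sum_{i,j}\operatorname{tr}_H(v_{ij}v_{ij}^*)$. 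Each entry then reduces to the identity $\|x\delta_e\|=\|x^*\delta_e\|$ for $x\in\mathcal N(H)$, which is the standard traciality of $\tau(y)=\langle y\delta_e,\delta_e\rangle$ on the group von Neumann algebra. This identity holds because the antiunitary $J\xi(g)=\overline{\xi(g^{-1})}$ satisfies $Jx\delta_e=x^*\delta_e$. Putting the pieces together gives $N+1=\operatorname{tr}(VV^*)\le N$, a contradiction. Hence $\ker T\ne0$.

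A small point needs care about which side the operators act on. The paper's $H$-action on $\ell^2(A)$ is by left translation, so equivariant operators are right convolutions, i.e.\ elements of the right group von Neumann algebra. The trace argument is identical there. Alternatively, one can cite directly L\"uck's result that $\dim_H$ is additive and satisfies $\dim_H\overline{\operatorname{ran}T}=\dim_H(\ker T)^\perp$ \cite[Lemma~1.13 and Theorem~1.12]{Luck02}, which gives $N+1-\dim_H\ker T\le N$ at once. I expect the main obstacle to be only the careful justification of $\operatorname{tr}(V^*V)=\operatorname{tr}(VV^*)$ across unequal multiplicities. The rest is formal.
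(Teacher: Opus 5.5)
Your proposal is correct and follows essentially the same route as the paper. You assume $T$ is injective and pass to the equivariant isometry $V$ from the polar decomposition. You then contradict $\Tr(V^*V)=N+1$ with $\Tr(VV^*)\le N$, using the entrywise identity $\tau(v_{ij}^*v_{ij})=\|v_{ij}\delta_e\|^2=\|v_{ij}^*\delta_e\|^2=\tau(v_{ij}v_{ij}^*)$ for operators commuting with left translations.

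One point is slightly imprecise. $V$ is not a Borel function of $T^*T$, so its equivariance is better justified, as the paper does, by uniqueness of the polar decomposition: conjugating $T=V|T|$ by the unitary $H$-actions produces another polar decomposition of $T$.
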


\begin{proof}
Suppose that $T$ is injective. Write its polar decomposition as $T=V(T^*T)^{1/2}$, where $(T^*T)^{1/2}$ is the positive square root of $T^*T$ and the operator $V$ is isometric on $(\Ker T)^\perp$, hence on the whole source. It is $H$-equivariant by uniqueness of the polar decomposition.

Write $V=(v_{ij})$ as an $N\times(N+1)$ matrix. Its entries belong to the commutant of the left regular representation of $H$, that is, the algebra of bounded operators on $\ell^2(H)$ commuting with all left translations. The canonical trace on this algebra is $\tau(B)=(B\delta_e)(e)$. For every such $B$, equivariance gives $(B^*\delta_e)(h)=\overline{(B\delta_e)(h^{-1})}$, and hence
$$
\tau(B^*B)=\|B\delta_e\|_2^2=\|B^*\delta_e\|_2^2=\tau(BB^*).
$$
The functional $\tau$ is positive and satisfies $\tau(I)=1$. On square matrices over this algebra, let $\Tr$ be the sum of $\tau$ on the diagonal. Applying the preceding identity to each $v_{ij}$ gives
$$
\Tr(V^*V)=\sum_{i,j}\tau(v_{ij}^*v_{ij})
=\sum_{i,j}\tau(v_{ij}v_{ij}^*)=\Tr(VV^*).
$$
But $V^*V=I_{N+1}$ and $VV^*\leq I_N$, giving $N+1\leq N$, a contradiction.
\end{proof}

We have a unitary $H$-equivariant isomorphism $L^2(E,m_+)\cong\ell^2(H)\otimes L^2(F_M,m_+)$. Explicitly, $f$ corresponds to the family $f_h(w)=f(hw)$ for $w\in F_M$, and $U_g$ replaces $h$ with $g^{-1}h$.

The measure on $F_M$ is non-atomic, finite and positive. Hence $L^2(F_M,m_+)$ is infinite-dimensional, so $\dim_H L^2(E,m_+)=\infty$, whereas $\dim_H\ell^2(A)=N$. Indeed, \eqref{eq:finite-orbits} gives an $H$-equivariant isomorphism $\ell^2(A)\cong\ell^2(H)^N$. Choose $N+1$ orthonormal functions in $L^2(F_M,m_+)$. Their $H$-translates span a closed invariant subspace isomorphic to $\ell^2(H)^{N+1}$. Applying Lemma~\ref{lem:finite-trace} to the restriction of $R$ gives a nonzero $f\in L^2(E,m_+)$ with $Rf=0$. Since $R$ commutes with complex conjugation, we may assume that $f$ is real-valued. Extend it by zero to $\Omega_+$. The lower bound on $p_+$ implies that this extension is in $L^1(\Omega_+,dw)$.

\section{Injectivity of the Liouville integral transform}\label{sec:injectivity}

\begin{lemma}\label{lem:injectivity}
Let $D\subset\RR^d$, $d\geq1$, be bounded and measurable, and let $f\in L^1(D)$. If
$$
\int_D\frac{f(y)}{|x-y|^{2d}}\,dy=0
$$
for almost every $x$ outside a sufficiently large ball, then $f=0$ almost everywhere.
\end{lemma}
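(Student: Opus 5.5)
Here is the plan. Write $g(x)=\int_D f(y)|x-y|^{-2d}\,dy$. Suppose $D\subset B(0,R_0)$ and $g=0$ a.e.\ on $|x|>R$ with $R>R_0$. The kernel $|x-y|^{-2d}=(|x|^2-2x\cdot y+|y|^2)^{-d}$ is real-analytic in $(x,y)$ on the region $|x|>R_0\ge|y|$. Consequently $g$ is real-analytic on $\{|x|>R_0\}$, by dominated differentiation under the integral, since $f\in L^1$ and the kernel's derivatives are bounded uniformly there. In particular $g$ is continuous, so $g=0$ everywhere for $|x|>R$.

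The main step is to extract polynomial moments from the expansion at infinity. For $|x|$ large, put $x=t\theta$ with $\theta\in S^{d-1}$ and $s=1/t$. Then
$$
|x-y|^{-2d}=s^{2d}\bigl(1-2s\,\theta\cdot y+s^2|y|^2\bigr)^{-d}
=s^{2d}\sum_{k\ge0}s^k C_k(\theta,y),
$$
where $C_k(\theta,y)$ is a homogeneous polynomial of degree $k$ in $y$. Up to the variables this is the Gegenbauer expansion $\sum_k s^k|y|^k C_k^{(d)}(\theta\cdot y/|y|)$. It converges uniformly for $s|y|\le s R_0<1/2$, say.

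Integrating term by term, with uniform convergence and $f\in L^1$, gives
$$
0=g(t\theta)=s^{2d}\sum_k s^k\int_D f(y)C_k(\theta,y)\,dy
$$
for all small $s$. Hence each coefficient $m_k(\theta)=\int_D f(y)\,|y|^kC^{(d)}_k(\theta\cdot\hat y)\,dy$ vanishes for every $\theta$.

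Next I must show that these vanishing coefficients force all polynomial moments $\int f\,y^\alpha$ to vanish. This is the obstacle, because the Gegenbauer polynomials $C^{(d)}_k$ with $\lambda=d\neq (d-2)/2$ are not zonal harmonics. I would argue inductively on $k$. The leading part of $|y|^kC_k^{(d)}(\theta\cdot\hat y)$ is $c_k(\theta\cdot y)^k$ with $c_k=2^k\binom{d+k-1}{k}\neq0$. The remaining terms have the form $(\theta\cdot y)^{k-2j}|y|^{2j}$. So I would instead prove the stronger statement that $\int f(y)\,|y|^{2j}(\theta\cdot y)^{l}\,dy=0$ for all $j,l$ and all $\theta$.

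A cleaner route avoids this bookkeeping. Replace $f$ by $f\,|y|^{2j}$, and write $|x-y|^{-2d}=\rho^{-d}$ with $\rho=|x|^2-2x\cdot y+|y|^2$. Vanishing of $g$ on an open set allows applying the Laplacian in $x$ repeatedly: $\Delta_x\rho^{-m}$ is a combination of $\rho^{-m-1}$ terms with coefficients $m(m+1)\cdot4|x-y|^2-2dm$. This gives $\Delta_x\rho^{-m}=c_m\rho^{-m-1}$ with $c_m=4m(m+1)-2dm$. That constant may vanish, at $m=d/2-1$, but never for $m\ge d$, so $\int f\rho^{-d-i}=0$ for all $i\ge0$.

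Expanding $\rho^{-d-i}$ in powers of $s$ at large $|x|$, the coefficient of $s^{2d+2i+1}$ is linear in $\int f(\theta\cdot y)$. I would then combine several exponents $i$ at each order to separate the contributions $(\theta\cdot y)^{k-2j}|y|^{2j}$. The triangular structure in $i$ (a Vandermonde-type system in the Pochhammer coefficients) should make this separation possible. It yields $\int f(y)(\theta\cdot y)^k\,dy=0$ for all $k$ and $\theta$.

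Polarization then gives $\int f\,y^\alpha\,dy=0$ for every monomial, since the powers $(\theta\cdot y)^k$ span the homogeneous polynomials of degree $k$. Finally, $f\,dy$ is a finite signed measure on the compact set $\overline D$ annihilating all polynomials. By Stone--Weierstrass the polynomials are dense in $C(\overline D)$, so the measure is zero and $f=0$ almost everywhere. The step I expect to be hardest is the separation of the moment information above. The first thing to try there is the explicit Gegenbauer triangular inversion over several exponents $\rho^{-d-i}$.
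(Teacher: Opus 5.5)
Your route works and is genuinely different from the paper's. The step you flagged as hardest, the separation, closes in one line.

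\textbf{The paper's route.} It inverts, $x=t/|t|^2$, so the hypothesis becomes $\int_D f(y)F_d(t,y)\,dy=0$ near $t=0$, with $F_\alpha(t,y)=(1-2t\cdot y+|t|^2|y|^2)^{-\alpha}$. There $\Delta_t^jF_d$ is a nonzero multiple of $|y|^{2j}F_{d+j}$. Applying $H_l(\partial_t)$ at $t=0$, for harmonic $H_l$, then extracts $\int_D f|y|^{2j}H_l\,dy$ directly, and harmonic decomposition gives all polynomial moments.

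\textbf{Your route.} You stay in the original variable. There $\Delta_x$ only raises the exponent and produces no weight $|y|^{2j}$; your constant $2m(2m+2-d)$ is exactly the paper's $4\alpha(\alpha+1-d/2)$. You then separate terms by varying the exponent and conclude by polarization.

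\textbf{Closing the separation.} Expand $(1-X)^{-\alpha}$ with $X=2s\,\theta\cdot y-s^2|y|^2$ and integrate term by term. For every integer $\alpha\ge d$, every $k\ge0$ and every $\theta$ this gives
\[
\sum_{j=0}^{\lfloor k/2\rfloor}(-1)^j\frac{2^{k-2j}(\alpha)_{k-j}}{j!\,(k-2j)!}\int_D f(y)\,(\theta\cdot y)^{k-2j}|y|^{2j}\,dy=0 .
\]
The left side is a polynomial in $\alpha$ of degree at most $k$. It vanishes at infinitely many integers, so it is identically zero. Since $(\alpha)_{k-j}$ has exact degree $k-j$, only the $j=0$ term contributes to the $\alpha^k$-coefficient, which is $\frac{2^k}{k!}\int_D f(\theta\cdot y)^k\,dy$. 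Hence this integral vanishes for every $\theta$, and polarization finishes the argument.

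If you prefer finitely many exponents, $\lfloor k/2\rfloor+1$ distinct ones suffice. Dividing by $(\alpha)_{k-\lfloor k/2\rfloor}$ leaves polynomials of degrees $0,\dots,\lfloor k/2\rfloor$, so the system is genuinely of Vandermonde type.

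\textbf{Two small corrections.} First, the sentence ``replace $f$ by $f|y|^{2j}$'' is unjustified, since the hypothesis is known only for $f$. Nothing later uses it, so delete it. Second, uniformly bounded derivatives give smoothness of $g$, not analyticity. Smoothness and continuity are all you actually use.

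\textbf{What each route buys.} The paper obtains the moments against the basis $|y|^{2j}H_l$ in a single Taylor computation at $t=0$, with no linear algebra. The price is the identity for $H_l(\partial_t)F_\beta$ and the harmonic decomposition. Your argument needs only the binomial series and polarization, but it relies on the whole family of exponents $d+i$. So for you the Laplacian step is what makes separation possible, whereas in the paper its role is to generate the weights $|y|^{2j}$.
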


\begin{proof}
Enlarge a ball containing $D$ so that the assumed vanishing holds outside it. The integral is continuous there, hence vanishes everywhere outside the ball. Substituting $x=t/|t|^2$ and dividing by $|t|^{2d}$, we get
$$
\int_D f(y)F_d(t,y)\,dy=0,
\qquad
F_\alpha(t,y)=(1-2t\cdot y+|t|^2|y|^2)^{-\alpha},
$$
for all sufficiently small nonzero $t$. Since $D$ is bounded, the integral is real-analytic near $t=0$, and its derivatives can be computed under the integral.

Write $(a)_j=a(a+1)\cdots(a+j-1)$, with $(a)_0=1$. Direct differentiation gives $\Delta_tF_\alpha=4\alpha(\alpha+1-d/2)|y|^2F_{\alpha+1}$, and iteration yields
$$
\Delta_t^jF_\alpha(t,y)
=4^j(\alpha)_j(\alpha+1-d/2)_j
|y|^{2j}F_{\alpha+j}(t,y).
$$
For a harmonic homogeneous polynomial $H_l$ of degree $l$ on $\RR^d$, let $H_l(\partial_t)$ denote the differential operator obtained by replacing each variable $t_i$ by $\partial/\partial t_i$. Then $[H_l(\partial_t)F_\beta(t,y)]_{t=0}=2^l(\beta)_lH_l(y)$. Indeed, the homogeneous Taylor term of degree $l$ is $2^l(\beta)_l(t\cdot y)^l/l!+|t|^2Q(t,y)$ for a polynomial $Q$. Applying $H_l(\partial_t)$ at zero kills the second term, since $[H_l(\partial_t)(|t|^2Q)]_{t=0}=[(\Delta H_l)(\partial_t)Q]_{t=0}=0$. The first term gives the stated identity.

Applying $H_l(\partial_t)\Delta_t^j$ to the vanishing integral at zero therefore gives
$$
4^j(d)_j(d/2+1)_j\,2^l(d+j)_l
\int_D f(y)|y|^{2j}H_l(y)\,dy=0.
$$
The coefficient is nonzero, so $f(y)\,dy$ annihilates every polynomial of the form $|y|^{2j}H_l(y)$.

Every polynomial is a sum of such terms by harmonic decomposition. Consequently $f(y)\,dy$ annihilates all polynomials. As a finite complex measure on a compact ball containing $D$, it annihilates all continuous functions by Stone--Weierstrass. The measure is therefore zero.
\end{proof}

\begin{remark}
When $d=2$, one can use complex moments. For bounded $w$ and sufficiently large $z$,
$$
\frac1{|z-w|^4}
=\frac1{|z|^4}\sum_{j,k\geq0}(j+1)(k+1)
\frac{w^j\overline w^{\,k}}{z^j\overline z^{\,k}}.
$$
Vanishing of the transform forces every mixed moment $\int f(w)w^j\overline w^{\,k}\,dw$ to vanish.
\end{remark}

\section{Proofs of the main results}\label{sec:proofs}

\begin{proof}[Proof of Theorem~\ref{thm:main}]
Continue under the assumption $\nu\sim\lambda\sim\check\nu$, and take the nonzero real function $f\in\Ker R$ constructed in Section~\ref{sec:current}.

For almost every $z\in\Omega_-\setminus\{\infty\}$, we have $k^-(z)\in\ell^2(A)$. Put $q(z)=M_A^*k^-(z)\in\ell^2(A)$; all entries are real. The series defining $R^*q(z)$ converges in $L^2(E,m_+)$, so its finite partial sums have a subsequence converging almost everywhere. Since $k^+(w)\in\ell^2(A)$ for almost every $w$, these partial sums also converge pointwise to the pairing of $q(z)$ with $k^+(w)$. The two limits agree almost everywhere.

Fubini's theorem applied to \eqref{eq:current-factor} now gives, for almost every $z\in\Omega_-\setminus\{\infty\}$,
$$
R^*q(z)(w)=\frac{c}{p_-(z)p_+(w)|z-w|^{2d}}\quad\text{in }L^2(E,m_+).
$$

Taking the inner product with $f$ and using its extension by zero gives
$$
0=\langle Rf,q(z)\rangle
=\langle f,R^*q(z)\rangle
=\frac{c}{p_-(z)}
\int_{\Omega_+}\frac{f(w)}{|z-w|^{2d}}\,dw
$$
for almost every $z\in\Omega_-$. This also holds Lebesgue almost everywhere, since $m_-$ and Lebesgue measure are equivalent. The domain $\Omega_-$ contains a neighborhood of infinity, and the zero extension of $f$ is in $L^1(\Omega_+,dw)$. Lemma~\ref{lem:injectivity} gives $f=0$ Lebesgue almost everywhere, and equivalence of $m_+$ and Lebesgue measure contradicts the choice of $f$. Thus $\nu\perp\lambda$.
\end{proof}

\begin{proof}[Proof of Corollary~\ref{cor:kleinian}]
Choose a torsion-free finite-index subgroup $\Gamma_0<\Gamma$ consisting of orientation-preserving isometries. Then $\Gamma_0\backslash\HH^3$ is a closed hyperbolic $3$-manifold. By Kahn--Markovic \cite[Theorem~1.1]{KM}, $\Gamma_0$ contains a closed quasi-Fuchsian surface subgroup $H$. It is convex cocompact, and its limit set is a Jordan curve. Replacing $H$ by its subgroup of index at most two preserving the two complementary domains gives a convex cocompact codimension-one subgroup of $\Gamma$. Theorem~\ref{thm:main} applies to $\Gamma$.
\end{proof}

\begin{proof}[Proof of Corollary~\ref{cor:geodesic}]
The totally geodesic immersion has compact domain, since it is proper and $\HH^n/\Gamma$ is compact. The image of a lift of the immersion to the universal covers is a hyperplane $\Pi\cong\HH^{n-1}$, and the corresponding subgroup $H<\Gamma$ acts cocompactly on $\Pi$. Thus $H$ is convex cocompact with $\Lambda_H=\partial \Pi$. Its subgroup of index at most two preserving both sides of $\Pi$ preserves the two components of $\partial\HH^n\setminus\partial \Pi$. Theorem~\ref{thm:main} applies.
\end{proof}

\begin{proof}[Proof of Corollary~\ref{cor:cubulation}]
By Fioravanti--Hagen \cite[Proposition~2.29]{FH21}, we may choose an essential proper cocompact action of $\Gamma$ on a $\mathrm{CAT}(0)$ cube complex: both sides of every hyperplane contain points arbitrarily far from it. Let $\Pi$ be a hyperplane and let $H$ be the subgroup of its stabilizer preserving both sides. The stabilizer acts cocompactly on the convex subspace $\Pi$, and $H$ has index at most two in it. Hence $H$ is quasiconvex in $\Gamma$, and therefore convex cocompact in $\HH^n$. The limit sets of the two halfspaces, with $\Lambda_H$ removed, give a nonempty open $H$-invariant partition of $\partial\Gamma\setminus\Lambda_H$; see \cite[Lemmas~2.3 and~2.18]{FH21}. Theorem~\ref{thm:main} applies.
\end{proof}

\bibliographystyle{siam}
\bibliography{biblio}

\end{document}